\newtheorem{lemma}{Lemma}
\newtheorem{theorem}{Theorem}
\newtheorem{remark}{Remark}
\begin{document}
\leftline{ \scriptsize \it }

\title[Moment Estimates]{A note on Jain modified Phillips operators}
\maketitle

\begin{center}
{\bf Vijay Gupta} \\
Department of Mathematics, \\
Netaji Subhas Institute of Technology \\
Sector 3 Dwarka, New Delhi-110078, India \\
vijaygupta2001@hotmail.com \\
\vskip0.2in
{\bf G. C. Greubel} \\
Newport News, VA, United States \\
jthomae@gmail.com
\end{center}

\vspace{3mm}

\noindent \textbf{Abstract.} In the present article we define the Phillips type modification of the generalized
Sz{\'a}sz-Mirakjan operators. Moments, recurrence formulas, and other identities are established for these operators.
Approximation properties are also obtained with use of the Boham-Korovkin theorem.

\smallskip
\noindent \textbf{Keywords.} Phillips operators, Sz{\'a}sz-Mirakjan operators, Tricomi's confluent hypergeometric
function, Boham-Korovkin theorem.

\smallskip
\noindent \textbf{2000 Mathematics Subject Classification}: 41A25, 41A36.

\section{Introduction}
To obtain the generalization of the well known Sz\'asz-Mirakjan operators, Jain, \cite{J}, introduced the
following operators
\begin{align}\label{e1}  
B_{n}^{\beta}(f,x) = \sum_{k=0}^\infty L_{n,k}^{(\beta)}(x) \, f(k/n), \hspace{10mm} x \in [0,\infty),
\end{align}
where $0 \leq \beta<1$, and the basis functions are defined as
\begin{align*}
L_{n,k}^{(\beta)}(x) = \frac{nx(nx+k\beta)^{k-1}}{k!} \, e^{-(nx+k\beta)}
\end{align*}
with the normalization $\sum_{k=0}^{\infty}L_{n,k}^{(\beta)}(x)=1$. As a special case when $\beta=0$,
these operators, (\ref{e1}), reduce to the well-known Sz\'{a}sz-Mirakyan operators. It can be observed that
the extension of Jain's operators \cite{J} may be utilized to the generalization of Phillips type operators,
(\ref{e2}), in approximation theory. The main advantage is that such a generalization is possible and, for
large $n$, the convergence is faster. Recently Gupta-Malik \cite{vgnm} and Dhamiha-Deo \cite{nd} took the above
weights of the Jain basis function to define the modifications of generalized Baskakov operators, also Gupta and 
Agarwal, in \cite{vgrp}, compiled the results concerning convergence behavior of different operators for 
which this convergence is justified.

Recently Gupta and Greubel, \cite{vggr}, proposed the Durrmeyer type modification of the operators given in
(\ref{e1}) and established some direct results. Here we propose the generalized Phillips type operators for
$x\in [0,\infty)$ as
\begin{align*}
P_{n}^{\beta}(f,x) &= \sum_{k=1}^{\infty}\left(\int_0^\infty L_{n,k-1}^{(\beta)}(t)\,dt \right)^{-1}
L_{n,k}^{(\beta)}(x)\int_{0}^{\infty}L_{n,k-1}^{(\beta)}(t)f(t)\,dt+e^{-nx}f(0) \nonumber\\
\end{align*}
\begin{align}\label{e2} 
&= \sum_{k=1}^{\infty} \frac{< L_{n,k-1}^{(\beta)}(t), f(t) >}{< L_{n,k-1}^{(\beta)}(t), 1 >} \, L_{n,k}^{(\beta)}
(x)+e^{-nx}f(0)
\end{align}
where $< f,g > = \int_{0}^{\infty} f(t) \, g(t)dt$. For the case of $\beta = 0$, these operators reduce to the
Phillips operators (see \cite{RSP}). We may observe here that unlike the Phillips operators, these operators 
preserve only the constant functions. In the present article we obtain moments, identities for the coefficients
and the operators, recurrence identities and some direct estimates.

It should be mentioned that the polynomial form presented in (\ref{e11}) suggests that a reduction,
or more compact form is obtainable. Lemma 5 introduces these new polynomials for which other identities can be
obtained without carrying the terms involving $1- e^{-n x}$. Form and function are not lost by doing so and
Lemma 6 takes advantage of this reduction.


\section{Moments}
\begin{lemma} \cite{J}, \cite{af} \label{l1}
For the operators defined by (\ref{e1}) the moments are as follows:
\begin{align}
B_{n}^{\beta}(1,x) &= 1, \hspace{10mm} B_n^{\beta}(t,x)=\frac{x}{1-\beta} \nonumber\\
B_{n}^{\beta}(t^{2},x) &= \frac{x^2}{(1-\beta)^2}+\frac{x}{n \, (1-\beta)^3}, \nonumber\\
B_{n}^{\beta}(t^{3},x) &= \frac{x^{3}}{(1-\beta)^{3}} + \frac{3 \, x^{2}}{n \, (1-\beta)^{4}} + \frac{(1+2\beta)
\, x}{n^2 \, (1-\beta)^5} \label{e3} \\  
B_{n}^{\beta}(t^{4},x) &= \frac{x^{4}}{(1-\beta)^{4}} + \frac{6 \, x^{3}}{n (1-\beta)^{5}} + \frac{(7+8\beta)
\, x^{2}}{n^2 \, (1-\beta)^{6}} + \frac{(1 + 8\beta + 6 \beta^2) \, x}{n^{3} \, (1-\beta)^{7}} \nonumber\\
B_{n}^{\beta}(t^{5},x) &= \frac{x^{5}}{(1-\beta)^{5}} + \frac{10 \, x^{4}}{n \, (1-\beta)^{6}} + \frac{5 \,
(5 + 4\beta) \, x^{3}}{n^{2} \, (1-\beta)^{7}} \nonumber\\
& \hspace{10mm} + \frac{15 \, (1 + 4\beta + 2\beta^2) \, x^{2}}{n^{3} \, (1-\beta)^{8}} + \frac{(1 + 22\beta
+ 58\beta^2 + 24\beta^3) \, x}{n^{4} \, (1-\beta)^{9}} \nonumber
\end{align}
\end{lemma}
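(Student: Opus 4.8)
The plan is to compute the moments $B_n^{\beta}(t^j,x)$ for $j=0,1,\dots,5$ directly from the series definition (\ref{e1}), using the generating-function identities for the Jain basis functions $L_{n,k}^{(\beta)}(x)$. Since $B_n^{\beta}(t^j,x)=\sum_{k=0}^\infty L_{n,k}^{(\beta)}(x)(k/n)^j$, the task reduces to evaluating the power sums $S_j(nx):=\sum_{k=0}^\infty L_{n,k}^{(\beta)}(x)\,k^j$ and then dividing by $n^j$. The key tool is the classical Abel-type generating identity
\begin{align*}
\sum_{k=0}^{\infty} \frac{u(u+k\beta)^{k-1}}{k!}\, e^{-k\beta} z^{k} = e^{uw},
\end{align*}
where $w=w(z)$ is the solution of $w = z e^{\beta w}$ with $w(1)=1$ (for $0\le\beta<1$), evaluated at $u=nx$ and $z=1$ after the relevant differentiations. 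Concretely, $S_0=1$ is the stated normalization; writing $F(z)=e^{uw(z)}$ one has $S_j(nx)=(z\,d/dz)^j F(z)\big|_{z=1}$, and the derivatives of $w$ at $z=1$ follow by implicit differentiation of $w=ze^{\beta w}$: $w'(1)=1/(1-\beta)$, and so on. This mechanically produces each $S_j$ as a polynomial in $nx$ whose coefficients are rational in $\beta$, and dividing by $n^j$ gives exactly the right-hand sides in (\ref{e3}).

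An equivalent and perhaps cleaner route, which I would actually carry out, is to set up a recurrence in $j$. Differentiating the basic generating function with respect to $\beta$ (or using the known ladder relation for Jain operators) yields a recursion expressing $B_n^{\beta}(t^{j+1},x)$ in terms of $B_n^{\beta}(t^{j},x)$ and $x\,\partial_x B_n^{\beta}(t^{j},x)$; in fact the literature references \cite{J}, \cite{af} record the relation
\begin{align*}
B_{n}^{\beta}(t^{j+1},x) = \frac{x}{n(1-\beta)}\Big(\tfrac{1}{n}\,\partial_x + j\beta\,(\cdots)\Big) B_n^{\beta}(t^j,x) + \text{lower order},
\end{align*}
and the precise form can be pinned down by matching $j=0,1$. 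Starting from $B_n^{\beta}(1,x)=1$ and $B_n^{\beta}(t,x)=x/(1-\beta)$, one then cranks the recurrence four more times. Each application is a finite polynomial manipulation: differentiate the previous moment in $x$, multiply by the prefactor, collect powers of $x$, and simplify the $\beta$-polynomials in the numerators — this is where bookkeeping errors are most likely, especially tracking the coefficients $7+8\beta$, $1+8\beta+6\beta^2$, $1+22\beta+58\beta^2+24\beta^3$, etc., and verifying the powers $(1-\beta)^{-(2j-1)}$ on the lowest-order term.

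The main obstacle is therefore not conceptual but computational: establishing (or correctly transcribing) the exact recurrence with the right $\beta$-dependent coefficients, and then executing the five iterations without sign or coefficient slips. A good sanity check at each stage is to set $\beta=0$ and confirm the result collapses to the known Sz\'asz–Mirakjan moments $B_n^0(t^j,x)$ (e.g. $B_n^0(t^2,x)=x^2+x/n$, $B_n^0(t^3,x)=x^3+3x^2/n+x/n^2$), and to check the leading term is always $x^j/(1-\beta)^j$, which follows immediately since $B_n^\beta(t,x)=x/(1-\beta)$ forces the top-degree contribution to iterate multiplicatively. Since Lemma~\ref{l1} is quoted directly from \cite{J} and \cite{af}, I would present the derivation sketch via the generating function as justification and refer the reader there for the full details, rather than reproducing every line.
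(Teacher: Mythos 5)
The paper offers no proof of Lemma~\ref{l1} at all: it is quoted verbatim from \cite{J} and \cite{af}, so there is nothing internal to compare your argument against. That said, your generating-function derivation is sound in substance and is essentially the classical route underlying those references: the Abel/Lagrange identity $\sum_{k\ge 0}\frac{u(u+k\beta)^{k-1}}{k!}z^k=e^{uw(z)}$ with $w=ze^{\beta w}$, specialized so that the $e^{-k\beta}$ weights are absorbed, does reduce each moment to $(z\,d/dz)^j$ applied at the normalization point, and $w'=1/(1-\beta)$ there is exactly what produces the factors $(1-\beta)^{-j}$ and $(1-\beta)^{-(2j-1)}$ you flag. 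One small inconsistency to fix: with the factor $e^{-k\beta}z^k$ in your series, the implicit function should satisfy $w=ze^{\beta(w-1)}$ (equivalently, evaluate the unweighted identity at $z=e^{-\beta}$); the equation $w=ze^{\beta w}$ with $w(1)=1$ as you wrote it forces $1=e^{\beta}$ and so only holds at $\beta=0$. Your stated value $w'(1)=1/(1-\beta)$ is the one coming from the corrected equation, so the rest of the computation is unaffected. The alternative ``ladder recurrence'' you mention is left too vague to constitute a proof on its own (pinning down a recurrence ``by matching $j=0,1$'' does not establish it for all $j$), but since you present the generating-function route as the actual derivation and the $\beta=0$ and leading-term sanity checks are correct, the proposal is an acceptable justification for a result the paper itself merely cites.
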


\begin{lemma} \label{l2} For $0\le \beta<1$, we have
\begin{align}\label{e4}  
\frac{< L_{n,k-1}^{(\beta)}(t), t^{r} >}{< L_{n,k-1}^{(\beta)}(t), 1 >} = P_{r}(k-1; \beta)
\end{align}
where $<f,g> =\int_{0}^{\infty} f(t) \, g(t)dt$ and $P_{r}(k-1; \beta)$ is a polynomial of order $r$ in the
variable $k$.
In particular
\begin{align}
P_{0}(k-1; \beta) &= 1 \nonumber\\
P_{1}(k-1; \beta) &= \frac{1}{n} \left[ (1-\beta) \, k + \frac{\beta \, (2-\beta)}{1-\beta} \right], \nonumber\\
P_{2}(k-1; \beta) &= \frac{1}{n^{2}} \left[ (1-\beta)^{2} \, k^{2} + a_{1}^{2} \, k + \frac{\beta^2 \,
(3-\beta)}{1-\beta} \right],
\label{e5} \\  
P_{3}(k-1; \beta) &= \frac{1}{n^{3}} \left[(1-\beta)^{3} \, k^3 + 3 \, a_{1}^{3} \, k^{2}
+ \frac{a_{2}^{3} \, k}{1-\beta} + \frac{\beta^{3} \, (4-\beta)}{1-\beta} \right] \nonumber
\end{align}
\begin{align*}
P_{4}(k-1; \beta) &= \frac{1}{n^{4}} \left[ (1-\beta)^{4} \, k^{4} + 2 \, a_{1}^{4} \, k^{3} +
a_{2}^{4} \, k^{2} + \frac{2 \, a_{3}^{4} \, k}{1-\beta} + \frac{\beta^{4} \, (5-\beta)}{1-\beta} \right] \nonumber\\
P_{5}(k-1; \beta) &= \frac{1}{n^{5}} \left[ (1-\beta)^{5} \, k^{5} + 5 \, a_{1}^{5} \, k^{4} + 5 \,
a_{2}^{5} \, k^{3} + \frac{5 \, a_{3}^{5} \, k^{2}}{1-\beta} + \frac{a_{4}^{5} \, k}{1-\beta} +
\frac{\beta^{5} \, (6-\beta)}{1-\beta} \right] \nonumber
\end{align*}
where
\begin{align*}
a_{1}^{2} &= 1 + 4\beta - 2\beta^2 \\
a_{1}^{3} &= 1 + \beta - 3\beta^2 + \beta^3
\hspace{25mm} a_{2}^{3} = 2 + 4\beta + 6\beta^2 - 12\beta^3 + 3\beta^4 \\
a_{1}^{4} &= 3 - 2\beta -7\beta^2 + 8\beta^3 - 2\beta^4
\hspace{10mm} a_{2}^{4} = 11 + 16\beta + 6\beta^2 - 24\beta^3 + 6\beta^4 \\
& \hspace{10mm} a_{3}^{4} = 3 + 5\beta + 5\beta^2 + 5\beta^3 - 10\beta^4 + 2\beta^5 \\
a_{1}^{5} &= (1-\beta)^{3} \, (2 + 2\beta - \beta^2)
\hspace{17mm} a_{2}^{5} = (1-\beta) \, (7 + 8\beta - 8\beta^3 + 2\beta^4) \\
& \hspace{10mm} a_{3}^{5} = 10 + 6\beta - 3\beta^2 - 8\beta^3 -12\beta^4 + 12\beta^5 - 2\beta^6 \\
& \hspace{10mm} a_{4}^{5} = 24 + 36\beta + 30\beta^2 + 20\beta^3 + 15\beta^4 - 30\beta^5 + 5\beta^6.
\end{align*}
\end{lemma}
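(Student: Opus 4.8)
The plan is to normalise the integration variable and then reduce everything to a recurrence in the order $r$. Put $m=k-1$ and substitute $u=nt$ in the numerator and denominator of the left--hand side of (\ref{e4}); this gives
\[
\frac{< L_{n,k-1}^{(\beta)}(t),\,t^{r} >}{< L_{n,k-1}^{(\beta)}(t),\,1 >}
=\frac{1}{n^{r}}\cdot\frac{C_{r+1}(m)}{C_{1}(m)},\qquad
C_{s}(m):=\int_{0}^{\infty}u^{s}\,(u+m\beta)^{m-1}\,e^{-u}\,du ,
\]
so it suffices to understand the numbers $C_{s}(m)$. These are, up to elementary factors, values of Tricomi's confluent hypergeometric function: the substitution $u=m\beta v$ turns $C_{s}(m)$ into $(m\beta)^{s+m}\,\Gamma(s+1)\,U(s+1,\,s+m+1,\,m\beta)$, which is where that function enters, as announced in the abstract (the case $k=1$ being handled directly).

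The engine of the argument is a three--term recurrence in $s$. Integrating the exact derivative $\frac{d}{du}\big[\,u^{s}(u+m\beta)^{m}e^{-u}\,\big]$ over $(0,\infty)$ and using the splitting $(u+m\beta)^{m}=u(u+m\beta)^{m-1}+m\beta(u+m\beta)^{m-1}$ to re-express the resulting integrals through the $C_{s}(m)$, one obtains, for $s\ge 1$,
\[
C_{s+1}(m)=\big(s+m(1-\beta)\big)\,C_{s}(m)+s\,m\beta\,C_{s-1}(m),
\]
while the same computation with $s=0$ (together with the boundary analysis flagged below) yields $C_{1}(m)=m(1-\beta)\,C_{0}(m)$, which is what is needed to start the recurrence. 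Dividing by $C_{1}(m)$ and writing $Q_{r}(m):=C_{r+1}(m)/C_{1}(m)$, one gets $Q_{0}\equiv 1$, $Q_{1}(m)=1+m(1-\beta)+\beta/(1-\beta)$, and
\[
Q_{r}(m)=\big(r+m(1-\beta)\big)\,Q_{r-1}(m)+r\,m\beta\,Q_{r-2}(m),\qquad r\ge 1 .
\]
An induction on $r$ then shows immediately that $Q_{r}(m)$ is a polynomial in $m$ --- hence in $k=m+1$ --- of degree exactly $r$, with leading coefficient $(1-\beta)^{r}$; this establishes the general assertion of the lemma with $P_{r}(k-1;\beta)=n^{-r}Q_{r}(k-1)$.

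The explicit list (\ref{e5}) is then obtained by running this recurrence for $r=1,2,3,4,5$ and collecting powers of $k$: the polynomials $a_{i}^{j}$ recorded in the statement are precisely the coefficients that appear, and verifying them is routine (if somewhat lengthy) algebra in $\beta$. The point that needs genuine care is the $s=0$ step used to start the recurrence: there the antiderivative $u^{s}(u+m\beta)^{m}e^{-u}$ does not vanish at the lower endpoint, so a boundary contribution of the shape $(m\beta)^{m}$ enters the relation between $C_{1}(m)$ and $C_{0}(m)$, and one must argue that it does not disturb the stated polynomial form of $P_{r}$ --- equivalently, that after Kummer's transformation $U(a,b,z)=z^{1-b}\,U(a-b+1,\,2-b,\,z)$ only the terminating part of the relevant Tricomi function survives. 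Handling this cleanly, rather than the bookkeeping for $r\le 5$, is the real obstacle.
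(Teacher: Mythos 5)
Your reduction and your three--term recurrence are correct: integrating $\frac{d}{du}\bigl[u^{s}(u+m\beta)^{m}e^{-u}\bigr]$ for $s\ge 1$ does give $C_{s+1}(m)=(s+m(1-\beta))\,C_{s}(m)+s\,m\beta\,C_{s-1}(m)$, and after dividing by $C_{1}(m)$ this is precisely the paper's recurrence (\ref{e9}) of Lemma~\ref{l3}. Your route to it (an exact derivative and integration by parts) is more elementary than the paper's, which passes through Tricomi's function, a ${}_{2}F_{0}\to{}_{1}F_{1}$ transformation, and a contiguous relation for ${}_{1}F_{1}$; up to that point the two arguments differ only in packaging.

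The problem is the base case, and the obstacle you flag at the end is not a removable technicality: the boundary term genuinely disturbs the polynomial form. The $s=0$ integration by parts gives $C_{1}(m)=m(1-\beta)\,C_{0}(m)+(m\beta)^{m}$, so $Q_{1}(m)=C_{2}(m)/C_{1}(m)=1+m(1-\beta)+m\beta\,C_{0}(m)/C_{1}(m)$ is not the affine function of $m$ you wrote down (your formula requires $C_{1}=m(1-\beta)C_{0}$ exactly). The discrepancy is not exponentially negligible either; for $k=3$ one has $L_{n,2}^{(\beta)}(t)=\tfrac{1}{2}nt(nt+2\beta)e^{-(nt+2\beta)}$, whence
\[
\frac{< L_{n,2}^{(\beta)}(t),\,t >}{< L_{n,2}^{(\beta)}(t),\,1 >}=\frac{3+2\beta}{n\,(1+\beta)},
\]
which differs from the displayed $P_{1}(2;\beta)=\tfrac{1}{n}\bigl[3(1-\beta)+\tfrac{\beta(2-\beta)}{1-\beta}\bigr]$ for every $\beta\neq 0$, and one checks similarly that $k\mapsto C_{2}(k-1)/C_{1}(k-1)$ is not an affine function of $k$ at all. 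Consequently your recurrence, started from the true $Q_{0}$ and $Q_{1}$, cannot reproduce the list (\ref{e5}). For what it is worth, the paper's own proof has exactly the same hole in exactly the same place: it arrives at the exact expression (\ref{e8}) for $P_{r}$ as $(k)_{r}n^{-r}$ times a ratio of two terminating ${}_{1}F_{1}$'s and then asserts that polynomiality in $k$ is ``fairly evident,'' which is where the unproved (and, for $\beta\neq 0$, false) claim enters. You have correctly located the crux; it cannot be closed with the lemma stated as an exact identity.
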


\begin{proof}
First, consider the integral:
\begin{align*}
<L_{n,k-1}^{(\beta)}(t), t^{r} > &= \int_{0}^{\infty} L_{n,k-1}^{(\beta)}(t) \, t^{r} \, dt  \\
&= \frac{n}{\Gamma(k)}\int_{0}^{\infty} e^{-(nt+(k-1)\beta)} \, t^{r+1} \, (nt+(k-1)\beta)^{k-2} \, dt.
\end{align*}
With use of Tricomi's confluent hypergeometric function:
\begin{align*}
U(a,b,z) = \frac{1}{\Gamma(a)} \, \int_{0}^{\infty} e^{-zt} \, t^{a-1} \, (1+t)^{b-a-1} \, dt, \hspace{5mm} a>0,z>0
\end{align*}
we have
\begin{align}
<L_{n,k-1}^{(\beta)}(t), t^{r} > &= \frac{n}{\Gamma(k)} \, \int_{0}^{\infty} e^{-(nt+(k-1)\beta)} \, t^{r+1}
\, (nt+(k-1)\beta)^{k-2} \, dt \nonumber\\
&= \frac{((k-1)\beta)^{k+r}}{\Gamma(k) \, n^{r+1}} \, e^{-(k-1)\beta} \, \int_{0}^{\infty} e^{-(k-1)\beta t} \,
(1+t)^{k-2} \, t^{r+1} \, dt \nonumber\\
&= \frac{\Gamma(r+2) \, ((k-1)\beta)^{k+r}}{\Gamma(k) \, n^{r+1}} \, e^{-(k-1)\beta} \, U(r+2,k+r+1,(k-1)\beta). \label{e6}
\end{align}
By using
\begin{align*}
U(a,b,z) = z^{-a} \, {}_{2}F_{0} \left( a, a-b+1; - ; - \frac{1}{z} \right)
\end{align*}
and
\begin{align*}
{}_{2}F_{0}(-n, a; - ; z) = (-1)^{n} \, (a)_{n} \, z^{n} \, {}_{1}F_{1}\left(-n; 1-a-n; - \frac{1}{x} \right)
\end{align*}
then
\begin{align}\label{e7}  
<L_{n,k-1}^{(\beta)}(t), t^{r} > = \frac{(k)_{r}}{n^{r+1}} \, e^{-(k-1) \, \beta} \, {}_{1}F_{1}(2-k; 1-r-k; (k-1)\beta).
\end{align}
Divide both sides of (\ref{e7}) by $<L_{n,k-1}^{(\beta)}(t), 1 >$ to obtain
\begin{align}\label{e8}  
P_{r}(k-1; \beta) = \frac{(k)_{r}}{n^{r}} \, \frac{{}_{1}F_{1}(2-k; 1-r-k; (k-1)\beta)}
{{}_{1}F_{1}(2-k; 1-k; (k-1)\beta)}.
\end{align}
It is fairly evident that $P_{r}(k-1;\beta)$ is a polynomial in $k$ of order $r$.
\end{proof}

\begin{lemma}\label{l3}
For $0 \leq \beta < 1$, $r \geq 0$, the polynomials $P_{r}(k; \beta)$ satisfy the recurrence relationship
\begin{align}\label{e9}  
n^{2} P_{r+2}(k-1; \beta) = n [ (1-\beta) (k-1) + r + 2 ] P_{r+1}(k-1; \beta) + \beta (r+2) (k-1) P_{r}(k-1;
\beta).
\end{align}
\end{lemma}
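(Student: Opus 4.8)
The plan is to peel off the common denominator and reduce \eqref{e9} to a three–term relation among the unnormalized moments. Write $M_{r}=\langle L_{n,k-1}^{(\beta)}(t),t^{r}\rangle$; by \eqref{e4} we have $P_{r}(k-1;\beta)=M_{r}/M_{0}$ for every $r\ge 0$, and the \emph{same} denominator $M_{0}=\langle L_{n,k-1}^{(\beta)}(t),1\rangle$ occurs in all three of $P_{r},P_{r+1},P_{r+2}$. Hence it is enough to prove
\begin{align*}
n^{2}M_{r+2}=n\bigl[(1-\beta)(k-1)+r+2\bigr]M_{r+1}+\beta(r+2)(k-1)M_{r},
\end{align*}
and then divide through by $M_{0}$ and invoke \eqref{e4}.

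To establish this for the $M_{r}$ I would work from the integral form used in the proof of Lemma~\ref{l2}: with $\alpha:=(k-1)\beta$,
\begin{align*}
M_{r}=\frac{n}{\Gamma(k)}\int_{0}^{\infty}e^{-(nt+\alpha)}\,t^{r+1}\,(nt+\alpha)^{k-2}\,dt ,
\end{align*}
valid for $k\ge 2$ (at $k=1$ one has $L_{n,0}^{(\beta)}=e^{-nt}$ and the identity is immediate, or one notes that the result, being an algebraic identity in $k$, follows by continuation). The device is to integrate a total derivative whose boundary terms vanish for $r\ge 0$, $k\ge 2$:
\begin{align*}
0=\int_{0}^{\infty}\frac{d}{dt}\Bigl[e^{-nt}\,t^{r+2}\,(nt+\alpha)^{k-1}\Bigr]\,dt .
\end{align*}
Differentiating produces three integrals; the one coming from differentiating $(nt+\alpha)^{k-1}$ already carries the factor $(nt+\alpha)^{k-2}$, and in the remaining two I would use $(nt+\alpha)^{k-1}=nt\,(nt+\alpha)^{k-2}+\alpha\,(nt+\alpha)^{k-2}$, so that every term is of the shape $\int_{0}^{\infty}e^{-nt}t^{s}(nt+\alpha)^{k-2}\,dt$. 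Since $M_{s}$ equals $\tfrac{n}{\Gamma(k)}e^{-\alpha}$ times $\int_{0}^{\infty}e^{-nt}t^{s+1}(nt+\alpha)^{k-2}\,dt$, these integrals are exactly $M_{r+2},M_{r+1},M_{r}$ (the $t$–power shift $t^{r+2}(nt+\alpha)^{k-1}\mapsto nt^{r+3}(nt+\alpha)^{k-2}+\alpha t^{r+2}(nt+\alpha)^{k-2}$ matching $M_{r+2}$ and $M_{r+1}$), and collecting coefficients yields the displayed three–term relation.

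An alternative, purely hypergeometric route starts from the closed form \eqref{e8} together with the Kummer contiguous relation
\begin{align*}
b(b-1)\,{}_{1}F_{1}(a;b-1;z)+b(1-b-z)\,{}_{1}F_{1}(a;b;z)+(b-a)z\,{}_{1}F_{1}(a;b+1;z)=0 ,
\end{align*}
applied at $b-1$ with $a=2-k$ and $b=1-r-k$, so that the three Kummer functions are precisely those attached by \eqref{e8} to $P_{r},P_{r+1},P_{r+2}$; clearing the Pochhammer factors $(k)_{r+1}=(k+r)(k)_{r}$, $(k)_{r+2}=(k+r)(k+r+1)(k)_{r}$ and the powers of $n$ turns this into \eqref{e9} once one rewrites $1+r+k-(k-1)\beta=(1-\beta)(k-1)+r+2$.

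I expect the only genuine work in either route to be bookkeeping: in the integration–by–parts argument, matching the $t$–power shifts against the integrals defining $M_{r+2}$ and $M_{r+1}$ and carrying the constant $\tfrac{n}{\Gamma(k)}e^{-\alpha}$ through; in the hypergeometric argument, pinning down the signs and Pochhammer factors in the contiguous relation and simplifying the coefficient of $P_{r+1}$. Neither step is conceptually deep—once the correct three–term relation for the $M_{r}$ is in hand, \eqref{e9} is forced—but one should flag the degenerate value $k=1$, which is handled separately (or absorbed into the algebraic nature of the identity in $k$).
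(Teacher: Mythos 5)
Your proposal is correct, and it actually contains two routes: the second (hypergeometric) one is essentially the paper's own proof, while the first is genuinely different. The paper works directly from the closed form \eqref{e8} and applies the Kummer contiguous relation $b(b-1)\,{}_{1}F_{1}(a;b-1;z)-b(b-1+z)\,{}_{1}F_{1}(a;b;z)-(a-b)z\,{}_{1}F_{1}(a;b+1;z)=0$ to the three functions ${}_{1}F_{1}(2-k;\,\cdot\,;(k-1)\beta)$ attached to consecutive $P_r$'s, then divides by ${}_{1}F_{1}(2-k;1-k;(k-1)\beta)$ --- exactly your second route, up to how the signs and the shift in $b$ are distributed. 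Your first route, integrating the total derivative $\frac{d}{dt}\bigl[e^{-nt}t^{r+2}(nt+\alpha)^{k-1}\bigr]$ and splitting $(nt+\alpha)^{k-1}=nt(nt+\alpha)^{k-2}+\alpha(nt+\alpha)^{k-2}$, does check out: writing $I_{s}=\int_{0}^{\infty}e^{-nt}t^{s+1}(nt+\alpha)^{k-2}\,dt$ one gets $0=-n(nI_{r+2}+\alpha I_{r+1})+(r+2)(nI_{r+1}+\alpha I_{r})+n(k-1)I_{r+1}$, i.e. $n^{2}I_{r+2}=n[(1-\beta)(k-1)+r+2]I_{r+1}+\beta(r+2)(k-1)I_{r}$ after substituting $\alpha=(k-1)\beta$, which upon dividing by $I_{0}$ is precisely \eqref{e9}. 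This buys a more elementary and self-contained argument: it needs no contiguous relation from \cite{Bateman}, makes the vanishing of the boundary terms explicit, and forces you to confront the degenerate case $k=1$ (where the $(nt+\alpha)^{k-2}$ form of the integrand is singular), which you correctly dispose of by direct computation with $L_{n,0}^{(\beta)}(t)=e^{-nt}$; the paper's route, by contrast, gets the identity for all $k$ in one stroke but leans on a quoted special-function identity.
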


\begin{proof}
By utilizing the recurrence relation, \cite{Bateman},
\begin{align*}
b(b-1) \, {}_{1}F_{1}(a; b-1; z) - b(b-1+z) \, {}_{1}F_{1}(a; b; z) - (a-b) z \, {}_{1}F_{1}(a; b+1; z) = 0,
\end{align*}
for the confluent hypergeometric functions it is evident that
\begin{align*}
0 &= \beta (r+1)(k-1) \, {}_{1}F_{1}(2-k; 2-r-k; (k-1)\beta) + (r+k-1)[(1-\beta)(k-1) + r + 1] \nonumber\\
& \hspace{10mm} \cdot  {}_{1}F_{1}(2-k; 1-r-k; (k-1)\beta) + (r+k)(1-r-k) \, {}_{1}F_{1}(2-k;-r-k; (k-1)\beta).
\end{align*}
Now dividing by ${}_{1}F_{1}(2-k; 1-k; (k-1)\beta)$, with use of (\ref{e8}), leads to the desired relationship
for the polynomials $P_{r}(k-1; \beta)$ given by (\ref{e9}).
\end{proof}

\begin{lemma}\label{l4}
If the $r$-th order moment with monomials $e_{r}(t)=t^{r},r=0,1,\cdots$ of the operators (\ref{e2}) be defined as
\begin{align*}
T_{n,r}^{\beta}(x):P_{n}^{\beta}(e_{r},x) = \sum_{k=1}^{\infty}\left(\int_0^\infty L_{n,k-1}^{(\beta)}(t)\,dt \right)^{-1}
L_{n,k}^{(\beta)}(x) \, \int_{0}^{\infty} L_{n,k-1}^{(\beta)}(t) \, t^{r} \, dt
\end{align*}
or
\begin{align}
T_{n,r}^{\beta}(x) = \sum_{k=1}^{\infty} P_{r}(k-1; \beta) \, L_{n,k}^{(\beta)}(x). \label{e10}  
\end{align}
The first few are:
\begin{align}
T_{n,0}^{\beta}(x) &= 1, \hspace{10mm}  T_{n,1}^{\beta}(x)=x + \frac{\beta (2-\beta)}{n(1-\beta)}(1-e^{-nx}), \nonumber\\
T_{n,2}^{\beta}(x) &= x^{2} +  \frac{2(1 + 2\beta - \beta^2)x}{n(1-\beta)} + \frac{\beta^{2} (3-\beta) }{n^2
(1-\beta)}(1-e^{-nx}) \label{e11} \\   
T_{n,3}^{\beta}(x) &= x^{3} + \frac{3(2+2\beta - \beta^2) \, x^2}{n \, (1-\beta)} + \frac{3 (2 + 4\beta + \beta^2
- 4\beta^3 + \beta^4) \, x}{n^2 \, (1-\beta)^2} + \frac{\beta^3 (4-\beta)}{n^3 \, (1-\beta)} \, (1 - e^{-n x}). \nonumber
\end{align}
\end{lemma}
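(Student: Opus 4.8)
The plan is to combine the series representation \eqref{e10} with the explicit low-order polynomials $P_r(k-1;\beta)$ from \eqref{e5} and evaluate the resulting sums against the basis functions $L_{n,k}^{(\beta)}(x)$. Since each $P_r(k-1;\beta)$ is a polynomial of degree $r$ in $k$, it is enough to know the ``power sums'' $S_j(x):=\sum_{k=1}^\infty k^{\,j}\,L_{n,k}^{(\beta)}(x)$ for $j=0,1,2,3$; note $S_0(x)=\sum_{k=1}^\infty L_{n,k}^{(\beta)}(x)=1-e^{-nx}$ because of the normalization $\sum_{k=0}^\infty L_{n,k}^{(\beta)}(x)=1$ and $L_{n,0}^{(\beta)}(x)=e^{-nx}$. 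Equivalently — and this is the cleaner route — one recognizes that $\sum_{k=1}^\infty P_r(k-1;\beta)\,L_{n,k}^{(\beta)}(x)$ is almost the Jain operator $B_n^\beta$ applied to a monomial: by \eqref{e4}, $P_r(k-1;\beta)=\langle L_{n,k-1}^{(\beta)},t^r\rangle/\langle L_{n,k-1}^{(\beta)},1\rangle$, so $T_{n,r}^\beta(x)$ is the Phillips-type average \eqref{e2} of $e_r$, and one can read off the needed $\sum k^j L_{n,k}^{(\beta)}(x)$ from the Jain moments in Lemma~\ref{l1} after a shift of index.

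Concretely, I would first establish the three auxiliary sums
\begin{align}
\sum_{k=1}^\infty L_{n,k}^{(\beta)}(x) = 1-e^{-nx},\qquad
\sum_{k=1}^\infty k\, L_{n,k}^{(\beta)}(x) = \frac{nx}{1-\beta},\nonumber
\end{align}
and the corresponding closed forms for $\sum k^2 L_{n,k}^{(\beta)}(x)$ and $\sum k^3 L_{n,k}^{(\beta)}(x)$, obtained by expressing $k^2$ and $k^3$ in terms of $n B_n^\beta(t,x)=\sum k L_{n,k}^{(\beta)}$, $n^2 B_n^\beta(t^2,x)=\sum k^2 L_{n,k}^{(\beta)}$, $n^3 B_n^\beta(t^3,x)=\sum k^3 L_{n,k}^{(\beta)}$ and substituting the values from \eqref{e3}. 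The only subtlety is the $k=0$ term: $B_n^\beta$ sums from $k=0$ but $L_{n,0}^{(\beta)}(x)\,0^j=0$ for $j\ge 1$, so the sums from $k=1$ and from $k=0$ agree for $j\ge 1$ and differ only by $e^{-nx}$ for $j=0$ — which is exactly why the factor $(1-e^{-nx})$ appears attached to the $k$-independent (constant) term of each $P_r$.

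Then I would substitute. For $T_{n,0}^\beta$ this is immediate since $P_0\equiv 1$ gives $S_0(x)=1-e^{-nx}$; but wait — the stated answer is $T_{n,0}^\beta(x)=1$, so here one must remember that the full operator \eqref{e2} carries the extra term $e^{-nx}f(0)$, which for $f=e_0$ contributes $e^{-nx}$ and restores the total to $1$. For $r\ge 1$ the $e^{-nx}f(0)$ term vanishes, so $T_{n,r}^\beta(x)$ is exactly the series \eqref{e10}; plugging in $P_1(k-1;\beta)=\frac1n[(1-\beta)k+\beta(2-\beta)/(1-\beta)]$ and using $\sum k L_{n,k}^{(\beta)}=nx/(1-\beta)$ and $\sum_{k\ge1} L_{n,k}^{(\beta)}=1-e^{-nx}$ yields $T_{n,1}^\beta(x)=x+\frac{\beta(2-\beta)}{n(1-\beta)}(1-e^{-nx})$, and similarly for $r=2,3$ using the quadratic and cubic power sums.

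The main obstacle is purely bookkeeping: one must carry the $1-e^{-nx}$ correctly (it rides only on the constant term of $P_r$, since all higher $k$-powers kill the $k=0$ contribution), and one must verify that the algebraic combination of the Jain moments $B_n^\beta(t^j,x)$ — each of which has denominators $(1-\beta)^{2j-1}$ and messy $\beta$-polynomial numerators — collapses to the comparatively clean coefficients displayed in \eqref{e11}. I would do this term by term in decreasing powers of $x$, checking in particular that the $x^2$-coefficient of $T_{n,3}^\beta$ reduces to $3(2+2\beta-\beta^2)/(n(1-\beta))$ and the $x$-coefficient to $3(2+4\beta+\beta^2-4\beta^3+\beta^4)/(n^2(1-\beta)^2)$; a useful consistency check is that setting $\beta=0$ must reproduce the classical Phillips-operator moments $T_{n,r}^0(x)$, and that $T_{n,r}^\beta$ evaluated termwise against $L_{n,k}^{(\beta)}$ is consistent with Lemma~\ref{l3} applied at small $r$.
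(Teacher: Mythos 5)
Your proposal is correct and follows essentially the same route as the paper: expand each $P_r(k-1;\beta)$ from Lemma~\ref{l2} in powers of $k$, identify $\sum_{k}k^jL_{n,k}^{(\beta)}(x)$ with $n^jB_n^{\beta}(t^j,x)$ from Lemma~\ref{l1}, and track the $k=0$ discrepancy $e^{-nx}$ that attaches $(1-e^{-nx})$ to the constant term of $P_r$ (and, for $r=0$, is cancelled by the $e^{-nx}f(0)$ term of \eqref{e2}). The only difference is that you spell out the $j=0$ bookkeeping and the $r=0$ subtlety more explicitly than the paper does.
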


\begin{proof}
Obviously by (\ref{e2}), we have  $T_{n,0}^{\beta}(x)=1.$  Next by definition of $T_{n,r}^{\beta}(x)$, we have
\begin{align*}
T_{n,r}^{\beta}(x) = \sum_{k=1}^{\infty} \frac{< L_{n,k-1}^{(\beta)}(t), t^{r} >}{< L_{n,k-1}^{(\beta)}(t), 1 >} \,
L_{n,k}^{(\beta)}(x)
= \sum_{k=1}^{\infty} P_{r}(k-1; \beta) \, L_{n,k}^{(\beta)}(x).
\end{align*}
Using Lemma \ref{l1} and Lemma \ref{l2}, we have
\begin{align*}
T_{n,1}^{\beta}(x) &= \sum_{k=1}^{\infty} P_{1}(k-1; \beta) \, L_{n,k}^{(\beta)}(x)
= \sum_{k=1}^{\infty} L_{n,k}^{(\beta)}(x) \, \frac{1}{n} \left[ (1-\beta) k + \frac{\beta (2-\beta)}{1-\beta} \right] \\
&= (1-\beta)B_n^{\beta}(t,x) + \frac{\beta(2-\beta)}{n(1-\beta)} \, (B_n^{\beta}(1,x)-e^{-nx}) \\
&= x + \frac{\beta (2-\beta)}{n (1-\beta)} (1-e^{-nx}) .
\end{align*}
\begin{align*}
T_{n,2}^{\beta}(x) &= \sum_{k=1}^{\infty} P_{2}(k-1; \beta) \, L_{n,k}^{(\beta)}(x) \\
&= \sum_{k=1}^{\infty} \frac{1}{n^{2}} \left[ (1-\beta)^{2} \, k^2 + (1 + 4\beta - 2\beta^2)
\, k + \frac{\beta^{2} (3-\beta)}{1-\beta} \right] \, L_{n,k}^{(\beta)}(x) \\
&= (1-\beta)^2 \, B_{n}^{\beta}(t^2,x) + \frac{1 + 4\beta - 2\beta^2}{n} \, B_{n}^{\beta}(t,x) + \frac{\beta^{2}
(3-\beta)}{n^2(1-\beta)} \, (B_{n}^{\beta}(1,x)-e^{-nx}) \\
&= x^{2} + \frac{2(1 + 2\beta - \beta^2)x}{n(1-\beta)} + \frac{\beta^{2} (3-\beta)}{n^2(1-\beta)}(1-e^{-nx}).
\end{align*}
A continuation of this process will provide $T_{n,r}^{\beta}(x)$ for cases of $r \geq 3$.
\end{proof}

\begin{lemma}\label{l5}
Define the polynomials
\begin{align*}
f_{n,r}^{\beta}(x) = T_{n,r}^{\beta}(x) + \left[ \left(\frac{\beta}{n}\right)^{r} \, \frac{r+1-\beta}{1-\beta}
- \delta_{r,0} \right] \, e^{- n x}
\end{align*}
then
\begin{align}\label{e12}  
f_{n,r}^{\beta}(x) = \sum_{j=0}^{r-1} \binom{r}{j} \, \frac{b_{j}^{r} \, x^{r-j}}{(n (1-\beta))^{j}} +
\left(\frac{\beta}{n}\right)^{r} \, \frac{r+1-\beta}{1-\beta}.
\end{align}
where the first few coefficients are given by
\begin{align*}
b_{0}^{r} &= 1 \hspace{15mm} b_{1}^{r} = r-1 + 2\beta - \beta^{2} \nonumber\\
b_{2}^{r} &= (r-1)(r-2) + 4 (r-2) \beta + (7-2r) \beta^{2} - 4 \beta^{3} + \beta^{4} \nonumber\\
b_{3}^{4} &= 6 + 12\beta + 6\beta^{2} - 8\beta^{3} - 6\beta^{4} + 6\beta^{5} - \beta^{6}  \\
b_{3}^{5} &= 24 + 36\beta + 6\beta^{2} - 20 \beta^{3} - 3\beta^{4} + 6\beta^{5} - \beta^{6} \nonumber\\
b_{4}^{5} &= 24 + 48\beta + 48\beta^{2} - 8\beta^{3} - 31 \beta^{4} + 8\beta^{5} + 14 \beta^{6} - 8 \beta^{7}
+ \beta^{8} \nonumber
\end{align*}
\end{lemma}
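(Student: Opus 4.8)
The plan is to recognise $f_{n,r}^{\beta}$ as the defining series of $T_{n,r}^{\beta}$ with its missing $k=0$ term restored. For $r\ge 1$ the term $e^{-nx}f(0)$ in (\ref{e2}) vanishes, so $T_{n,r}^{\beta}(x)=\sum_{k\ge 1}P_{r}(k-1;\beta)L_{n,k}^{(\beta)}(x)$, while $L_{n,0}^{(\beta)}(x)=e^{-nx}$; hence, once one checks that the constant added in the definition of $f_{n,r}^{\beta}$ is exactly $P_{r}(-1;\beta)$,
\[
f_{n,r}^{\beta}(x)=\sum_{k=0}^{\infty}P_{r}(k-1;\beta)\,L_{n,k}^{(\beta)}(x)\qquad(r\ge 1),
\]
with $f_{n,0}^{\beta}(x)=1$ immediate. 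Writing $P_{r}(k-1;\beta)=\sum_{i=0}^{r}\gamma_{r,i}(\beta)\,k^{i}$ as a polynomial in $k$ (Lemma~\ref{l2}) and using $\sum_{k\ge 0}k^{i}L_{n,k}^{(\beta)}(x)=n^{i}B_{n}^{\beta}(t^{i},x)$ together with Lemma~\ref{l1} — each $B_{n}^{\beta}(t^{i},x)$ is a polynomial in $x$ of degree $i$, with zero constant term for $i\ge 1$ and $B_{n}^{\beta}(1,x)=1$ — shows at once that
\[
f_{n,r}^{\beta}(x)=\sum_{i=0}^{r}\gamma_{r,i}(\beta)\,n^{i}B_{n}^{\beta}(t^{i},x)
\]
is a polynomial of degree $r$ whose constant term is $\gamma_{r,0}(\beta)=P_{r}(-1;\beta)$.

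To evaluate $P_{r}(-1;\beta)$, set $k=0$ in the recurrence (\ref{e9}): with $c_{r}:=P_{r}(-1;\beta)$ this reads $n^{2}c_{r+2}=n(r+1+\beta)c_{r+1}-\beta(r+2)c_{r}$, and (\ref{e5}) supplies $c_{0}=1$, $c_{1}=\beta(2-\beta)/(n(1-\beta))$. A short induction then gives $c_{r}=(\beta/n)^{r}(r+1-\beta)/(1-\beta)$, which is precisely the additive constant in the definition of $f_{n,r}^{\beta}$ and the free term of (\ref{e12}); in particular $f_{n,r}^{\beta}$ reproduces Lemma~\ref{l4} for $r=0,1,2,3$ once the $1-e^{-nx}$ pieces cancel.

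For the non-constant coefficients I would turn (\ref{e9}) into a recurrence among the $f_{n,r}^{\beta}$ themselves. Splitting the factor $(1-\beta)(k-1)+r+2$ in (\ref{e9}), multiplying by $L_{n,k}^{(\beta)}(x)$ and summing over $k\ge 0$ gives
\[
n^{2}f_{n,r+2}^{\beta}(x)=n(r+2)f_{n,r+1}^{\beta}(x)+n(1-\beta)\Sigma_{r+1}(x)+\beta(r+2)\Sigma_{r}(x),
\]
where $\Sigma_{s}(x)=\sum_{k\ge 0}(k-1)P_{s}(k-1;\beta)L_{n,k}^{(\beta)}(x)$ is again a polynomial: its coefficients are read off from the $\gamma$'s (the $k$-coefficients of $(k-1)P_{s}$ are $\gamma_{s,i-1}-\gamma_{s,i}$) and the moments of Lemma~\ref{l1}, or, since $P_{0},\dots,P_{s+1}$ form a triangular basis of the polynomials of degree $\le s+1$ in $k$, from $\Sigma_{s}=\sum_{i\le s+1}\mu_{s,i}(\beta)f_{n,i}^{\beta}$ with $\mu_{s,s+1}=n/(1-\beta)$, $\mu_{s,s}=-(s+1)/(1-\beta)^{2}$. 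Substituting the ansatz (\ref{e12}) into this recurrence and comparing the coefficient of each $x^{r-j}$ reduces the claim to a finite list of polynomial identities in $\beta$ and $r$, the binomial factors $\binom{r}{j}$ being forced by a Pascal-type relation and the cases $r\le 3$ of Lemma~\ref{l4} fixing the low-order $b_{j}^{r}$. The formulas for $b_{0}^{r},b_{1}^{r},b_{2}^{r}$ at general $r$ need only the top three $k$-coefficients of $P_{r}$ — closed forms in $\beta$ obtained from the (\ref{e9})-recurrence — against the three leading $x$-coefficients of the $B_{n}^{\beta}(t^{i},x)$; the tabulated $b_{3}^{4},b_{3}^{5},b_{4}^{5}$ are then specializations.

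The main obstacle is exactly this last step: the recurrence manipulation is mechanical, but one must verify two features not visible in advance — that the coefficient of $x^{r-j}$ emerges with \emph{exactly} the factor $(n(1-\beta))^{-j}$ (the individual contributions carry disparate powers $(1-\beta)^{-(2i-r+j)}$, so a genuine cancellation must leave a polynomial numerator) and that this numerator carries the combinatorial factor $\binom{r}{j}$, with $b_{j}^{r}$ of degree $j$ in $r$. A useful check and guide is the case $\beta=0$: there $P_{r}(k-1;0)=(k)_{r}/n^{r}$, and the Lah expansion $(k)_{r}=\sum_{j}\binom{r-1}{j-1}\tfrac{r!}{j!}\,k(k-1)\cdots(k-j+1)$ combined with $\sum_{k\ge 0}k(k-1)\cdots(k-j+1)\,L_{n,k}^{(0)}(x)=(nx)^{j}$ gives immediately $b_{j}^{r}=(r-1)!/(r-1-j)!$, so one expects the general identity to be a $\beta$-deformation of this computation.
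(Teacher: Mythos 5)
Your proposal is correct and follows essentially the same route as the paper's proof: both recognize the added constant as the missing $k=0$ term $P_{r}(-1;\beta)\,L_{n,0}^{(\beta)}(x)$, expand $P_{r}(k-1;\beta)$ as a polynomial in $k$, and convert the resulting sums into the Jain moments $B_{n}^{\beta}(t^{i},x)$ of Lemma \ref{l1}. Your inductive evaluation of $P_{r}(-1;\beta)=(\beta/n)^{r}(r+1-\beta)/(1-\beta)$ from the recurrence (\ref{e9}) supplies a detail the paper merely reads off from the pattern in (\ref{e5}); the remaining coefficients are, in both treatments, ultimately obtained by direct computation from Lemmas \ref{l1} and \ref{l2}.
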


\begin{proof}
In view of (\ref{e5}) the general form of $P_{r}(k-1; \beta)$ is
\begin{align*}
P_{r}(k-1; \beta) = \frac{1}{n^{r}} \, \left[ (1-\beta)^{r} k^{r} + \sum_{s=1}^{r-1} \gamma_{s}^{r} \,
a_{s}^{r} \, k^{r-s-1} + \frac{\beta^{r} (1 + r - \beta)}{n^{r} \, (1-\beta)} \right],
\end{align*}
 where $\gamma_{s}^{r}$ are numeric coefficients, which leads to, for $r \geq 1$,
\begin{align*}
f_{n,r}^{\beta}(x) &= \sum_{k=1}^{\infty} P_{r}(k-1; \beta) \, L_{n,k}^{(\beta)}(x) + \frac{\beta^{r}
(1 + r - \beta)}{n^{r} \, (1-\beta)} \, e^{-n x} \\
&= (1-\beta)^{r} \, B_{n}^{\beta}(t^{r},x) + \sum_{s=1}^{r-1} \frac{\gamma_{s}^{r} \, a_{s}^{r}}{n^{s+1}} \,
B_{n}^{\beta}(t^{r-s-1}, x) + \frac{\beta^{r} (1 + r - \beta)}{n^{r} \, (1-\beta)} \, (B_{n}^{\beta}(1,x) \nonumber\\
& \hspace{10mm} - e^{-n x}) + \frac{\beta^{r} (1 + r - \beta)}{n^{r} \, (1-\beta)} \, e^{-n x} \\
&= (1-\beta)^{r} \, B_{n}^{\beta}(t^{r},x) + \sum_{s=1}^{r-1} \frac{\gamma_{s}^{r} \, a_{s}^{r}}{n^{s+1}} \,
B_{n}^{\beta}(t^{r-s-1}, x) + \frac{\beta^{r} (1 + r - \beta)}{n^{r} \, (1-\beta)}.
\end{align*}
Making use of (\ref{e2}), (\ref{e5}), and (\ref{e11}) the desired relation is obtained.
\end{proof}

\begin{lemma}\label{l6}
For $r \geq 2$ the polynomials $f_{n,r}^{\beta}(x)$ satisfy the relation
\begin{align}\label{e13}  
f_{n,r}^{\beta}(x) = \left[ x + \frac{2(r-1) + \beta(2-\beta)}{n \, (1-\beta)} \right] \, f_{n,r-1}^{\beta}(x)
+ \sum_{j=1}^{r-1} \frac{(-1)^{j} \, \beta^{j-1} \, \alpha_{j}^{r} }{(n (1-\beta))^{j+1}} \, f_{n,r-j-1}^{\beta}(x),
\end{align}
where the first few coefficients are given by
\begin{align*}
\alpha_{1}^{r} &= (r-1) (r-2 + 4\beta - \beta^2) \\
\alpha_{2}^{r} &= (r-2)(r-3) (2 + (2r-5) \beta + \beta^2) \\
\alpha_{3}^{4} &= 6 (1 + 6\beta - \beta^2)  \\
\alpha_{3}^{5} &= 12 (3 + 10\beta - 2\beta^{2})  \hspace{10mm} \alpha_{4}^{5} = 48 ( 1 + \beta + \beta^2)
\end{align*}
\end{lemma}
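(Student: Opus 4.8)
The plan is to read (\ref{e13}) as a statement about expanding one polynomial in a convenient basis. By Lemma \ref{l5}, formula (\ref{e12}) shows that for every $s\ge 0$ the polynomial $f_{n,s}^{\beta}(x)$ is monic of degree exactly $s$ (with $f_{n,0}^{\beta}\equiv 1$), since the $j=0$ term contributes $x^{s}$ and every other term has strictly lower degree. Hence $\{f_{n,0}^{\beta},f_{n,1}^{\beta},\dots,f_{n,r-1}^{\beta}\}$ is a basis of the space of polynomials of degree $\le r-1$. Since $f_{n,r}^{\beta}(x)$ and $x\,f_{n,r-1}^{\beta}(x)$ are both monic of degree $r$, their difference has degree $\le r-1$ and therefore admits a unique representation $f_{n,r}^{\beta}(x)-x\,f_{n,r-1}^{\beta}(x)=\sum_{s=0}^{r-1}\lambda_{s}\,f_{n,s}^{\beta}(x)$. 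Thus (\ref{e13}) is nothing but this expansion, with the claimed identifications $\lambda_{r-1}=\dfrac{2(r-1)+\beta(2-\beta)}{n(1-\beta)}$ and $\lambda_{r-1-j}=\dfrac{(-1)^{j}\beta^{j-1}\alpha_{j}^{r}}{(n(1-\beta))^{j+1}}$ for $j=1,\dots,r-1$; in particular the coefficients are forced, so establishing the formulas simultaneously establishes that such a recurrence exists.

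To pin down the $\lambda_{s}$ I would run the standard triangular reduction from the top degree downwards. Comparing coefficients of $x^{r-1}$ on both sides and using $b_{1}^{r}=r-1+2\beta-\beta^{2}$ from (\ref{e12}) gives $\lambda_{r-1}=\binom{r}{1}\frac{b_{1}^{r}}{n(1-\beta)}-\binom{r-1}{1}\frac{b_{1}^{r-1}}{n(1-\beta)}$, and a short simplification yields exactly $\frac{2(r-1)+\beta(2-\beta)}{n(1-\beta)}$. Subtracting $\lambda_{r-1}f_{n,r-1}^{\beta}$ and reading off the coefficient of $x^{r-2}$ determines $\lambda_{r-2}$, equivalently $\alpha_{1}^{r}$, in terms of $b_{2}^{r}$, $b_{2}^{r-1}$, $b_{1}^{r-1}$; continuing in this way, each successive power of $x$ introduces exactly one new unknown, so the whole system is triangular and uniquely solvable. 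Carrying the computation out for $j=1,2$ with general $r$ and for $j=3,4$ with $r=4,5$, using the explicit $b_{j}^{r}$ recorded in Lemma \ref{l5}, produces the stated expressions for $\alpha_{1}^{r},\alpha_{2}^{r},\alpha_{3}^{4},\alpha_{3}^{5},\alpha_{4}^{5}$.

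The main work — and the step I expect to be the real obstacle — is the bookkeeping of the $\beta$-polynomial coefficients: one must expand $x\,f_{n,r-1}^{\beta}(x)$ and the products $\lambda_{s}f_{n,s}^{\beta}(x)$, reconcile the binomial factors $\binom{r}{j}$, $\binom{r-1}{j}$ and $\binom{r-j-1}{i}$, and, for the lowest powers of $x$, account for the constant terms $(\beta/n)^{s}\frac{s+1-\beta}{1-\beta}$ of the $f_{n,s}^{\beta}$; after clearing the common $(1-\beta)$ denominators, each coefficient comparison becomes a polynomial identity in $\beta$ (with $r$ as a parameter) that is checked directly. This is also why only the low-index coefficients are recorded: a closed form for $\alpha_{j}^{r}$ with general $j$ and $r$ is available in principle from the triangular recursion but becomes unwieldy. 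An alternative route would be to start from the three-term relation (\ref{e9}) of Lemma \ref{l3}, multiply by $L_{n,k}^{(\beta)}(x)$ and sum over $k$, but this forces one to re-express $\sum_{k}(k-1)P_{s}(k-1;\beta)L_{n,k}^{(\beta)}(x)$, which is more delicate than the basis-expansion argument above.
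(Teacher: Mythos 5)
Your proposal takes essentially the same route as the paper: the paper's proof likewise forms $\lambda_{n,r}^{\beta}(x)=f_{n,r}^{\beta}(x)-\bigl[x+\tfrac{2(r-1)+\beta(2-\beta)}{n(1-\beta)}\bigr]f_{n,r-1}^{\beta}(x)$ and observes that the remainder can be expressed in terms of $\{f_{n,0}^{\beta},\dots,f_{n,r-2}^{\beta}\}$, which is exactly your basis-expansion argument. You merely make explicit what the paper leaves implicit (monicity of the $f_{n,s}^{\beta}$ from (\ref{e12}), uniqueness of the expansion, and the triangular coefficient comparison), so the proposal is correct and equivalent in approach.
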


\begin{proof}
By considering
\begin{align*}
\lambda_{n,r}^{\beta}(x) = f_{n,r}^{\beta}(x) - \left[ x + \frac{2(r-1) + \beta(2-\beta)}{n \, (1-\beta)} \right]
\, f_{n,r-1}^{\beta}(x)
\end{align*}
it follows that the remaining polynomials can be expressed in terms of $\{ f_{n,0}^{\beta}(x), \cdots,
f_{n,r-2}^{\beta}(x) \}$.
\end{proof}



\section{Special Identities}

\begin{lemma}\label{l7}
The polynomials $T_{n,r}^{\beta}(x)$ satisfy the differential identity
\begin{align}\label{e14}  
& [-\beta \, x \, (D+n) + n \, x + \beta] \, [ n^{2} \, T_{n,r+2}^{\beta}(x) - n (r+\beta + 1) \, T_{n,r+1}^{\beta}(x)
+ \beta (r+2) \, T_{n,r}^{\beta}(x)] \nonumber\\
& \hspace{10mm} = n \, x^{2} \, (D + n) \, [ n(1-\beta) \, T_{n,r+1}^{\beta}(x) + \beta
(r+2) \, T_{n,r}^{\beta}(x)],
\end{align}
where $D = \frac{d}{dx}$.
\end{lemma}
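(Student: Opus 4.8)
The plan is to derive (\ref{e14}) from just two ingredients: the three-term recurrence (\ref{e9}) of Lemma \ref{l3} for the polynomials $P_{r}(k-1;\beta)$, and an exact first-order differential action, on the basis functions $L_{n,k}^{(\beta)}(x)$, of the two operators appearing on the left- and right-hand sides of (\ref{e14}). Throughout I use the series form (\ref{e10}), $T_{n,r}^{\beta}(x)=\sum_{k\ge1}P_{r}(k-1;\beta)\,L_{n,k}^{(\beta)}(x)$, together with the fact that this series, and the series obtained from it after one differentiation in $x$, converge uniformly on compact subsets of $(0,\infty)$, so that the differential operators below may be applied term by term.

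\emph{Step 1 (differential action on the basis).} Logarithmic differentiation of the definition of $L_{n,k}^{(\beta)}(x)$ gives
\[
D\,L_{n,k}^{(\beta)}(x)=\Bigl[\tfrac1x+\tfrac{n(k-1)}{nx+k\beta}-n\Bigr]L_{n,k}^{(\beta)}(x),
\]
and hence, after clearing the denominator,
\[
x\,(D+n)\,L_{n,k}^{(\beta)}(x)=\frac{k\,(nx+\beta)}{nx+k\beta}\,L_{n,k}^{(\beta)}(x).
\]
Writing $A:=-\beta x(D+n)+nx+\beta$ and $B:=nx^{2}(D+n)$ for the two operators in (\ref{e14}), a one-line computation from the last relation yields
\[
A\,L_{n,k}^{(\beta)}(x)=\frac{nx\,(nx+\beta)}{nx+k\beta}\,L_{n,k}^{(\beta)}(x),\qquad B\,L_{n,k}^{(\beta)}(x)=\frac{nx\,k\,(nx+\beta)}{nx+k\beta}\,L_{n,k}^{(\beta)}(x),
\]
so that, on each basis function,
\[
B\,L_{n,k}^{(\beta)}(x)=k\,A\,L_{n,k}^{(\beta)}(x)\qquad(k\ge1).
\]

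\emph{Step 2 (rearranging the recurrence).} Expanding the first bracket on the right of (\ref{e9}) and collecting the terms not carrying a factor of $k$, Lemma \ref{l3} is equivalent to
\[
n^{2}P_{r+2}(k-1;\beta)-n(r+\beta+1)P_{r+1}(k-1;\beta)+\beta(r+2)P_{r}(k-1;\beta)=k\bigl[\,n(1-\beta)P_{r+1}(k-1;\beta)+\beta(r+2)P_{r}(k-1;\beta)\,\bigr].
\]

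\emph{Step 3 (assembling).} Multiplying this identity by $L_{n,k}^{(\beta)}(x)$ and summing over $k\ge1$, by (\ref{e10}) the bracketed combination on the left of (\ref{e14}), namely $\Phi(x):=n^{2}T_{n,r+2}^{\beta}(x)-n(r+\beta+1)T_{n,r+1}^{\beta}(x)+\beta(r+2)T_{n,r}^{\beta}(x)$, satisfies
\[
\Phi(x)=\sum_{k\ge1}k\bigl[n(1-\beta)P_{r+1}(k-1;\beta)+\beta(r+2)P_{r}(k-1;\beta)\bigr]L_{n,k}^{(\beta)}(x).
\]
Applying $A$, moving it inside the sum, and then using $k\,A\,L_{n,k}^{(\beta)}(x)=B\,L_{n,k}^{(\beta)}(x)$ from Step 1,
\[
A\,\Phi(x)=\sum_{k\ge1}\bigl[n(1-\beta)P_{r+1}(k-1;\beta)+\beta(r+2)P_{r}(k-1;\beta)\bigr]B\,L_{n,k}^{(\beta)}(x)=B\bigl[n(1-\beta)T_{n,r+1}^{\beta}(x)+\beta(r+2)T_{n,r}^{\beta}(x)\bigr],
\]
the last equality again by (\ref{e10}); this is precisely (\ref{e14}).

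The one genuinely creative step is Step 1: recognizing that $A$ and $B$ act on each $L_{n,k}^{(\beta)}$ by the scalars $\tfrac{nx(nx+\beta)}{nx+k\beta}$ and $k$ times that, so that $B$ is exactly ``$A$ followed by multiplication by the index $k$'' at the level of the basis — this is what lets the stray factor $k$ produced by the recurrence of Lemma \ref{l3} be absorbed into a differential operator. Everything after that is bookkeeping. The only points needing a word of care are the justification of term-by-term differentiation (uniform convergence on compacta of $(0,\infty)$) and, for $r=0$, the use of the convention $T_{n,0}^{\beta}(x)=\sum_{k\ge1}L_{n,k}^{(\beta)}(x)=1-e^{-nx}$ coming from the series form (\ref{e10}); neither affects the argument.
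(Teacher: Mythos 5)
Your proof is correct and follows essentially the same route as the paper: your Step 2 is precisely the paper's identity (\ref{e15}), and your relation $B\,L_{n,k}^{(\beta)}(x)=k\,A\,L_{n,k}^{(\beta)}(x)$ is exactly the paper's differential identity (\ref{e16}) multiplied through by $x$. The only difference is that you supply the verification details (logarithmic differentiation of the basis, term-by-term application of the operators, and the $r=0$ convention) that the paper leaves implicit.
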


\begin{proof}
By making use of (\ref{e9}) then it is seen that
\begin{align}\label{e15} 
& \sum_{k=1}^{\infty} k \, [ n(1-\beta) \, P_{r+1}(k-1; \beta) + \beta (r+2) \, P_{r}(k-1; \beta) ]
\, L_{n,k}^{(\beta)}(x) \nonumber\\
& \hspace{5mm} = n^{2} \, T_{n,r+2}^{\beta}(x) - n (r+\beta+1) \, T_{n,r+1}^{\beta}(x) + \beta (r+2) \,
T_{n,r}^{\beta}(x).
\end{align}
The basis functions $L_{n,k}^{(\beta)}(x)$ are seen to satisfy the differential identity
\begin{align}\label{e16} 
n \, x \, (D+n) \, L_{n,k}^{(\beta)}(x) = k \, \left[ - \beta \, (D+n) + \frac{n \, x + \beta}{x} \right]
\, L_{n,k}^{(\beta)}(x),
\end{align}
which, when combined with (\ref{e15}), leads to the desired result.
\end{proof}

\begin{lemma}\label{l8}
A differential recurrence relation between the modified Phillips operators, $T_{n,r}^{\beta}(x)$, and the
generalized Sz\'{a}sz-Mirakyan-Durrmeyer operators, $V_{n,r}^{\beta}(x)$, see \cite{vggr}, is given by
\begin{align}
& n \, \beta (1-\beta) \, \frac{d}{d\beta} \, T_{n,r}^{\beta}(x) + (r+1) \, \beta^{2} \, \frac{d}{d\beta} \,
T_{n,r-1}^{\beta}(x) \nonumber\\
& \hspace{2mm} = \beta \, n \, \left[ n \, V_{n,r+1}^{\beta}\left(x + \frac{\beta}{n} \right) - (r+1) \, V_{n,r}^{\beta}
\left( x + \frac{\beta}{n} \right) \right]  \label{e17} \\  
& \hspace{5mm} - \beta \, n^{2} \, T_{n,r+1}^{\beta}(x) - n (2r + 3\beta - \beta^{2}) \, T_{n,r}^{\beta}(x)
+ \beta (r+1) (r+\beta+1) \, T_{n,r-1}^{\beta}(x). \nonumber
\end{align}
\end{lemma}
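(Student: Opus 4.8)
The plan is to differentiate the series (\ref{e10}),
\[
T_{n,r}^{\beta}(x)=\sum_{k\ge 1}P_{r}(k-1;\beta)\,L_{n,k}^{(\beta)}(x),
\]
with respect to $\beta$. Since the coefficients $P_{r}(k-1;\beta)$ are rational in $\beta$ of degree $r$ in $k$ while $L_{n,k}^{(\beta)}(x)$ decays faster than exponentially in $k$, uniformly for $(x,\beta)$ in compact subsets of $[0,\infty)\times[0,1)$, termwise differentiation is legitimate and produces two families of terms, one from $\partial_{\beta}P_{r}(k-1;\beta)$ and one from $\partial_{\beta}L_{n,k}^{(\beta)}(x)$; the whole proof then amounts to recognising each family.

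For the basis functions, a direct computation from $L_{n,k}^{(\beta)}(x)=\frac{nx(nx+k\beta)^{k-1}}{k!}\,e^{-(nx+k\beta)}$ gives
\[
\frac{\partial}{\partial\beta}L_{n,k}^{(\beta)}(x)=(k-1)\,\frac{nx}{nx+\beta}\,L_{n,k-1}^{(\beta)}\!\Big(x+\frac{\beta}{n}\Big)-k\,L_{n,k}^{(\beta)}(x).
\]
The crucial feature is that $\partial_{\beta}$ lowers the Jain index from $k$ to $k-1$ and shifts the argument to $x+\beta/n$, i.e.\ it produces exactly the basis out of which the generalized Sz\'{a}sz-Mirakyan-Durrmeyer operators $V_{n,r}^{\beta}$ of \cite{vggr} are built, evaluated at $x+\beta/n$. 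After re-indexing $m=k-1$, the contribution of this term to $\frac{d}{d\beta}T_{n,r}^{\beta}(x)$ splits into $\frac{nx}{nx+\beta}\sum_{m\ge 0}m\,P_{r}(m;\beta)L_{n,m}^{(\beta)}(x+\beta/n)$ and a sum over $k\,P_{r}(k-1;\beta)L_{n,k}^{(\beta)}(x)$. Applying the recurrence (\ref{e9}) of Lemma~\ref{l3} with $r\mapsto r-1$, $k-1\mapsto m$, in the form
\[
m\big[n(1-\beta)P_{r}(m;\beta)+\beta(r+1)P_{r-1}(m;\beta)\big]=n^{2}P_{r+1}(m;\beta)-n(r+1)P_{r}(m;\beta),
\]
together with the series representation of the $V_{n,r}^{\beta}$ and their own Lemma~\ref{l3}-type identity from \cite{vggr}, collapses the Durrmeyer-type sums into $nV_{n,r+1}^{\beta}(x+\beta/n)$ and $V_{n,r}^{\beta}(x+\beta/n)$. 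This also explains the left side of (\ref{e17}): the weights $n\beta(1-\beta)$ on $\frac{d}{d\beta}T_{n,r}^{\beta}$ and $(r+1)\beta^{2}$ on $\frac{d}{d\beta}T_{n,r-1}^{\beta}$ are precisely those for which the $P_{r-1}$-piece arising from $\frac{d}{d\beta}T_{n,r-1}^{\beta}$ matches the $\beta(r+1)P_{r-1}(m;\beta)$ term of the recurrence.

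For the coefficients, I would compute $\partial_{\beta}P_{r}(k-1;\beta)$ from the confluent hypergeometric representation (\ref{e8}) using $\frac{d}{d\beta}\,{}_{1}F_{1}\big(a;b;(k-1)\beta\big)=(k-1)\frac{a}{b}\,{}_{1}F_{1}\big(a+1;b+1;(k-1)\beta\big)$ and Kummer's contiguous relations (equivalently, by differentiating (\ref{e9}) itself in $\beta$ and solving); this expresses $\partial_{\beta}P_{r}(k-1;\beta)$ as a combination of $P_{r+1},P_{r},P_{r-1}$ and of $(k-1)P_{r},(k-1)P_{r-1}$ with coefficients rational in $\beta$. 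Summing the resulting identity against $L_{n,k}^{(\beta)}(x)$, and using (\ref{e15}) to rewrite the sums $\sum_{k}k\,P_{s}(k-1;\beta)L_{n,k}^{(\beta)}(x)$, turns this family into a combination of $T_{n,r+1}^{\beta},T_{n,r}^{\beta},T_{n,r-1}^{\beta}$.

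Finally I would add the two families, cancel the $\sum_{k}k\,P_{s}L_{n,k}^{(\beta)}$ sums that occur with opposite signs from the two sources, and read off the coefficients of $T_{n,r+1}^{\beta},T_{n,r}^{\beta},T_{n,r-1}^{\beta}$ and of the shifted $V$-moments; comparing with (\ref{e17}) finishes the proof. The main obstacle is this last reorganisation: it is where the constants $2r+3\beta-\beta^{2}$ and $(r+1)(r+\beta+1)$ are actually pinned down, and it requires carrying along the precise normalization of $V_{n,r}^{\beta}$ from \cite{vggr} and its recurrence while keeping the index shift $k\mapsto k-1$ and the argument shift $x\mapsto x+\beta/n$ consistent throughout.
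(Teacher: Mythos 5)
Your proposal follows essentially the same route as the paper: differentiate the series (\ref{e10}) termwise, use a $\beta$-derivative identity for $L_{n,k}^{(\beta)}$ that lowers the index and shifts the argument to $x+\beta/n$ (producing the $V_{n,r}^{\beta}$ terms), together with a $\beta$-derivative identity for $P_{r}(k-1;\beta)$ obtained from the ${}_{1}F_{1}$ representation, and then recombine everything via (\ref{e15}) and the Lemma~5 identity of \cite{vggr}. The only substantive difference is that your formula for $\partial_{\beta}L_{n,k}^{(\beta)}(x)$ carries the factor $nx/(nx+\beta)$ --- which the direct computation does produce --- whereas the paper's (\ref{e19}) omits it, so you would still need to reconcile that factor with the normalization of $V_{n,r}^{\beta}$ before the shifted sums collapse as claimed.
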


\begin{proof}
The polynomials $P_{r}(k-1; \beta)$ satisfy
\begin{align}\label{e18} 
\beta \, \frac{d}{d\beta} \, P_{r}(k-1; \beta) = [r + \beta + (1-\beta) \, k] \, P_{r}(k-1;\beta) - n \,
P_{r+1}(k-1;\beta),
\end{align}
and the basis functions $L_{n,k}^{(\beta)}(x)$ are seen to satisfy the differential identity
\begin{align}\label{e19} 
\frac{d}{d \beta} \, L_{n,k}^{(\beta)}(x) = - k \, L_{n,k}^{(\beta)}(x) + (k-1) \, L_{n,k-1}^{(\beta)}\left( x
+ \frac{\beta}{n} \right).
\end{align}
Differentiating (\ref{e10}) with respect to $\beta$ leads to
\begin{align*}
\beta \, \frac{d}{d\beta} \, T_{n,r}^{\beta}(x) &= \beta \, \sum_{k=1}^{\infty} \left[ P_{r}(k-1;\beta) \,
\left(\frac{d}{d\beta} \, L_{n,k}^{(\beta)}(x) \right) + \left( \frac{d}{d\beta} \, P_{r}(k-1; \beta) \right)
\, L_{n,k}^{(\beta)}(x) \right]
\end{align*}
and can be seen in the form
\begin{align*}
& \left( \beta \, \partial_{\beta} - r - \beta \right) \, T_{n,r}^{\beta}(x) + n \, T_{n,r+1}^{\beta}(x) \\
& \hspace{2mm} = \beta \, \sum_{k=0}^{\infty} k \, P_{r}(k;\beta) \, L_{n,k}^{(\beta)}\left(x + \frac{\beta}{n}\right)
+ (1-2\beta) \, \sum_{k=1}^{\infty} k \, P_{r}(k-1;\beta) \, L_{n,k}^{(\beta)}(x).
\end{align*}
Now, by using (\ref{e15}) and the identity given in the proof of Lemma 5 of \cite{vggr}, then the desired
identity is obtained.
\end{proof}

\begin{remark} \label{r1}
If we denote the central moment as $\mu_{n,r}^\beta(x)=P_{n}^{\beta}((t-x)^r,x)$, then
\begin{align}
\mu_{n,1}^{\beta}(x) &=  F_{n,1}^{\beta}(x) \, (1-e^{-nx}), \nonumber\\
\mu_{n,2}^{\beta}(x) &=  \frac{2 (1+ 2\beta - \beta^2) \, x}{n(1-\beta)} +  F_{n,2}^{\beta}(x) \, (1-e^{-nx})
\label{e20} \\ 
\mu_{n,3}^{\beta}(x) &= \frac{3 \, \beta (\beta - 2) \, x^2}{n \, (1-\beta)} + 3 \, (2 + 4\beta + \beta^2 -
\beta^3 + \beta^4) \, \frac{x}{n^2 \, (1-\beta)^2} + F_{n,3}^{\beta}(x) \, (1 - e^{-nx}) \nonumber\\
\mu_{n,4}^{\beta}(x) &= \frac{4 \, \beta (2-\beta) \, x^3}{n \, (1-\beta)} + 2 \, (10 + 8\beta -13 \beta^2
+ 6\beta^3 + 3\beta^4) \, \frac{x^2}{n^2 \, (1-\beta)^2} \nonumber\\
& \hspace{10mm} + 4 \, (6 + 12\beta + 6 \beta^2 - 8\beta^3 - 6\beta^4 + 6\beta^5 - \beta^6) \, \frac{x}{n^3 \,
(1-\beta)^3} + F_{n,4}^{\beta}(x) \, (1 - e^{-nx}) \nonumber\\
\mu_{n,5}^{\beta}(x) &= \frac{5 \, \beta (\beta -2) \, x^4}{n \, (1-\beta)} + \frac{10 \, \beta^2 (3 - 4\beta
+ \beta^2) \, x^3}{n^2 \, (1-\beta)^2} + \frac{10 \, \lambda_{3}^{5} \, x^2}{n^3 \, (1-\beta)^3}
\nonumber\\
& \hspace{10mm}  + \frac{5 \, \lambda_{4}^{5} \, x}{n^4 \, (1-\beta)^4} + F_{n,5}^{\beta}(x) \, (1 - e^{-n x})
\nonumber
\end{align}
where
\begin{align}\label{e21} 
F_{n,r}^{\beta}(x) = \sum_{s=0}^{r-1} (-1)^{s} \binom{r}{s} \, \left(\frac{\beta}{n} \right)^{r-s} \,
\frac{r+1-s-\beta}{1-\beta} \, x^{s}
\end{align}
and
\begin{align*}
\lambda_{3}^{5} &= 12 + 12 \beta - 6 \beta^2 - 4 \beta^3 + 9 \beta^4 - 6 \beta^5 + \beta^6 \\
\lambda_{4}^{5} &= 23 + 38 \beta + 27 \beta^2 - 12 \beta^3 - 25 \beta^4 + 8 \beta^5 + 14 \beta^6 - 8 \beta^7 + \beta^8.
\end{align*}
\end{remark}



\section{Direct Estimates}

In this section, we establish the following direct result:

\begin{remark}\label{r2}
Let $f$ be a continuous function on $[0,\infty)$ for $n\to \infty$, the sequence $\{P_{n}^{\beta}(f,x)\}$ converges
uniformly to $f(x)$ in $[a,b]\subset[0,\infty)$, which follows from the well known Boham-Korovkin theorem.
\end{remark}

\begin{theorem}\label{t2} Let $f$ be a bounded integrable function on $[0,\infty)$ and has second derivative at a point
$x\in [0,\infty)$, then $$\lim_{n\to \infty} n[P_{n}^{\beta}(f,x)-f(x)] = \frac{\beta(2-\beta)}{1-\beta} f^{\prime}(x)
+ \frac{(1+2\beta-\beta^2)x}{1-\beta} \, f^{\prime\prime}(x).$$
\end{theorem}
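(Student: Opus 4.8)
\noindent\emph{Sketch of proof.} The plan is to run the classical Voronovskaja argument: expand $f$ by Taylor's formula at $x$ to second order, apply $P_n^\beta$, and read off the limit from the central moments of Remark \ref{r1}. Fix $x\in(0,\infty)$; the point $x=0$ is handled separately and trivially, since $L_{n,k}^{(\beta)}(0)=0$ for every $k\ge 1$ forces $P_n^\beta(f,0)=f(0)$. Because $f''(x)$ exists, write
\[
f(t)=f(x)+f'(x)(t-x)+\tfrac{1}{2}f''(x)(t-x)^{2}+\psi(t)\,(t-x)^{2},
\]
with $\psi(x)=0$ and $\psi(t)\to 0$ as $t\to x$. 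Since $f$ is bounded on $[0,\infty)$, for every $\delta>0$ the number $M_\delta:=\sup_{|t-x|\ge\delta}|\psi(t)|$ is finite, the quotient $\psi(t)$ being dominated on $\{|t-x|\ge\delta\}$ by $2\|f\|_\infty/\delta^{2}+|f'(x)|/\delta+\tfrac{1}{2}|f''(x)|$. As $P_n^\beta$ is linear and reproduces constants (Lemma \ref{l4}), applying it to both sides gives
\[
P_n^\beta(f,x)-f(x)=f'(x)\,\mu_{n,1}^\beta(x)+\tfrac{1}{2}f''(x)\,\mu_{n,2}^\beta(x)+P_n^\beta\!\big(\psi(t)(t-x)^2,x\big),
\]
every term being well defined because $\psi(t)(t-x)^2$ is $f$ minus a quadratic polynomial, hence bounded and integrable, and the moments through order four are finite.

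Multiply through by $n$ and let $n\to\infty$. From the explicit forms in Remark \ref{r1}, and since $e^{-nx}\to 0$ for $x>0$, one reads off $n\,\mu_{n,1}^\beta(x)$ and $n\,\mu_{n,2}^\beta(x)$, whose limits give the coefficient $\frac{\beta(2-\beta)}{1-\beta}$ of $f'(x)$ and, after the factor $\tfrac{1}{2}$, the coefficient $\frac{(1+2\beta-\beta^2)x}{1-\beta}$ of $f''(x)$. For the remainder, fix $\varepsilon>0$ and pick $\delta>0$ with $|\psi(t)|<\varepsilon$ for $|t-x|<\delta$; then $|\psi(t)(t-x)^2|\le \varepsilon(t-x)^2+(M_\delta/\delta^2)(t-x)^4$ for all $t\ge 0$, so positivity of $P_n^\beta$ gives
\[
n\,\big|P_n^\beta\!\big(\psi(t)(t-x)^2,x\big)\big|\le \varepsilon\,n\,\mu_{n,2}^\beta(x)+\frac{M_\delta}{\delta^2}\,n\,\mu_{n,4}^\beta(x).
\]
By Remark \ref{r1}, $n\,\mu_{n,2}^\beta(x)$ stays bounded and $n\,\mu_{n,4}^\beta(x)\to 0$, so the right-hand side has $\limsup\le \varepsilon\,C_x$; letting $\varepsilon\downarrow 0$ shows the remainder contributes nothing, and adding the three limits yields the assertion.

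The only step that is not routine bookkeeping is the estimate $n\,\mu_{n,4}^\beta(x)\to 0$. It is not automatic, since $P_n^\beta$ preserves only constants and $\mu_{n,4}^\beta(x)$ a priori contains an $O(1/n)$ term; one must invoke the precise expression from Remark \ref{r1}, where that term comes multiplied by $e^{-nx}$ while the remainder is $O(1/n^2)$, so that $n\,\mu_{n,4}^\beta(x)\to 0$ for $x>0$. (Alternatively, the far part of the remainder can be controlled via $\mu_{n,6}^\beta$ after checking the analogous cancellation.) Securing this estimate --- together with the elementary remark that boundedness of $f$ makes each $M_\delta$ finite --- is the crux; the rest is the standard Korovkin-type manipulation underlying all Voronovskaja-type statements.
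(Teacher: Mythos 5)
Your proposal follows the same skeleton as the paper's proof --- second-order Taylor expansion at $x$, apply $P_n^{\beta}$, and extract the limit from the central moments of Remark \ref{r1} --- but it handles the remainder term by a genuinely different device. The paper controls $P_n^{\beta}(r(t,x)(t-x)^2,x)$ via the Cauchy--Schwarz inequality (\ref{e23}) and then needs the Korovkin-type assertion (\ref{e24}) that $P_n^{\beta}(r^2(t,x),x)\to 0$, together with $n^2\mu_{n,4}^{\beta}(x)=O(1)$; note that (\ref{e24}) is asserted for $r^2\in C_2^{*}[0,\infty)$ without a convergence theorem for that class having been established (Remark \ref{r2} only covers continuous functions on compact subintervals). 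You instead use positivity and the near/far splitting $|\psi(t)(t-x)^2|\le \varepsilon (t-x)^2+(M_\delta/\delta^2)(t-x)^4$, reducing everything to the two explicit moment facts $n\,\mu_{n,2}^{\beta}(x)=O(1)$ and $n\,\mu_{n,4}^{\beta}(x)\to 0$. This is more self-contained and more careful on exactly the point where care is needed: you correctly observe that $\mu_{n,4}^{\beta}(x)$ has an a priori $O(1/n)$ term which is cancelled, up to an $e^{-nx}$ factor, by the corresponding term of $F_{n,4}^{\beta}(x)$, so that $n\,\mu_{n,4}^{\beta}(x)\to 0$ only for $x>0$ --- a cancellation the paper never makes explicit. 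Your remark that boundedness of $f$ makes $M_\delta$ finite is precisely the use of the hypothesis.

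Two caveats, both of which apply equally to the paper's own proof. First, your observation that $P_n^{\beta}(f,0)=f(0)$ shows the left-hand side vanishes at $x=0$ while the stated right-hand side is $\frac{\beta(2-\beta)}{1-\beta}f'(0)$, so the theorem as stated really requires $x>0$ (for $\beta>0$); calling $x=0$ "trivial" glosses over this. Second, the coefficients cannot simply be "read off": by Remark \ref{r1}, $\mu_{n,2}^{\beta}(x)$ contains the term $-\tfrac{2\beta(2-\beta)x}{n(1-\beta)}(1-e^{-nx})$ coming from $F_{n,2}^{\beta}(x)$, so that $\tfrac{1}{2}\lim_n n\,\mu_{n,2}^{\beta}(x)=\tfrac{x}{1-\beta}$ rather than $\tfrac{(1+2\beta-\beta^2)x}{1-\beta}$. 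Either Remark \ref{r1} or the theorem's displayed coefficient of $f''(x)$ must therefore be revisited; your method is sound, but you should carry out this last limit explicitly rather than assert that it reproduces the stated constant.
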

\begin{proof}
By the Taylor's expansion of $f$, we have
\begin{equation}\label{e22}  
f(t) = f(x) + f^{\prime}(x)(t-x) + \frac{1}{2} f^{\prime\prime}(x)(t-x)^{2} + r(t,x)(t-x)^{2},
\end{equation}
where $r(t,x)$ is the remainder term  and $\displaystyle\lim_{n\rightarrow\infty}r(t,x)=0.$
Operating $P_{n}^{\beta}$ to the equation (\ref{e22}), we obtain
\begin{align*}
P_{n}^{\beta}(f,x) - f(x) &= P_{n}^{\beta}(t-x,x) f^{\prime}(x) + P_{n}^{\beta}\left(\left(t-x\right)^{2},x\right)
\frac{f^{\prime\prime}(x)}{2} \\
& \hspace{10mm} + P_{n}^{\beta}\left( r\left( t,x\right)  \left(  t-x\right)^{2},x\right).
\end{align*}
Using the Cauchy-Schwarz inequality, we have
\begin{equation}\label{e23} 
P_{n}^{\beta}\left(r\left(t,x\right)\left(t-x\right)^{2},x\right)\leq \sqrt{P_{n}^{\beta}\left(r^{2}\left(t,x\right)
,x\right)} \, \sqrt{P_{n}^{\beta}\left(\left(t-x\right)^{4},x\right)}.
\end{equation}
As $r^{2}\left(x,x\right)=0$ and $r^{2}\left(t,x\right) \in C_{2}^{\ast}[0,\infty)$, we have
\begin{equation}\label{e24} 
\lim_{n\rightarrow\infty}P_{n}^{\beta}\left(r^{2}\left(t,x\right),x\right)=r^{2}\left(x,x\right) = 0
\end{equation}
uniformly with respect to $x\in\left[0,A\right].$ Now from (\ref{e23}), (\ref{e24}) and from Remark \ref{r1}, we get
\begin{align*}
\lim_{n\rightarrow\infty} n P_{n}^{\beta}\left(r\left(t,x\right) \left(t-x\right)^{2}, x\right) = 0.
\end{align*}
Thus
\begin{align*}
\lim_{n\rightarrow\infty} n \left(P_{n}^{\beta}(f,x)-f(x)\right)
&= \lim_{n\rightarrow\infty} n \biggl[P_{n}^{\beta}(t-x,x) f^{\prime}(x) + \frac{1}{2}f^{\prime\prime
}(x) P_{n}^{\beta}(\left(t-x\right)^{2},x) \\
& \hspace{25mm} + P_{n}^{\beta}(r\left(t,x\right)\left(t-x\right)^{2},x)\biggr] \\
&= \frac{\beta(2-\beta)}{1-\beta} f^{\prime}(x) + \frac{(1+2\beta-\beta^2)x}{1-\beta}f^{\prime\prime}(x).
\end{align*}
\end{proof}

By $C_B[0,\infty)$, we denote the class on real valued continuous bounded functions $f(x)$ for $x\in[0,\infty)$ with
the norm $||f||=\sup_{x\in[0,\infty)}|f(x)|$. For $f\in C_B[0,\infty)$ and $\delta >0$ the $m$-th order modulus
of continuity is defined as
$$\omega_m(f,\delta)=\sup_{0\le h\le \delta}\sup_{x\in[0,\infty)}|\Delta^m_hf(x)|,$$
where $\Delta$ is the forward difference. For the case $m=1$, we mean the usual modulus of continuity denoted by
$\omega(f,\delta).$
The Peetre's $K$-functional is defined as
$$K_2(f,\delta) = \inf_{g\in C_B^2[0,\infty)} \left\{ ||f-g|| + \delta ||g^{\prime\prime}||:g\in C_B^2[0,\infty)\right\},$$
where $$C_B^2[0,\infty)=\{g \in C_B[0,\infty): g^\prime, g^{\prime\prime} \in C_B[0,\infty)\}.$$

\begin{theorem}\label{t3} Let $f\in C_B[0,\infty)$ and $\beta>0,$ then
\begin{align*}
|P_{n}^{\beta}(f,x)-f(x)| &\leq C \omega_2\left(f,\sqrt{\delta_n}\right) + \omega\bigg(f,\frac{\beta (2-\beta)}
{n (1-\beta)} (1-e^{-nx})\bigg),
\end{align*}
where
$$\delta_{n} = \frac{ 2 (1+2\beta-\beta^2) x }{ n(1-\beta) } + (1-e^{-nx}) \, \left[\frac{\beta^2 (3-\beta)}
{n^2 (1-\beta)} - \frac{2\beta x(2-\beta)}{n(1-\beta)}\right] + \bigg(\frac{\beta (2-\beta)}{n (1-\beta)} (1-e^{-nx})\bigg)^2$$
and $C$ is a positive constant.
\end{theorem}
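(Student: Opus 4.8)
The plan is to use the standard Peetre $K$-functional technique for second-order moduli of smoothness, adapted to the fact that $P_n^\beta$ does not reproduce linear functions. First I would introduce the auxiliary operators that do reproduce linears, namely
\[
\widetilde{P}_n^{\beta}(f,x) = P_n^{\beta}(f,x) - f\!\left( x + \frac{\beta(2-\beta)}{n(1-\beta)}(1-e^{-nx}) \right) + f(x),
\]
so that, by Lemma~\ref{l4} and Remark~\ref{r1} (the formula for $T_{n,1}^{\beta}$, equivalently $\mu_{n,1}^{\beta}$), one has $\widetilde{P}_n^{\beta}(t-x,x)=0$ and $\widetilde{P}_n^{\beta}(1,x)=1$. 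Since $P_n^\beta$ is a positive linear operator with $P_n^\beta(1,x)=1$, the modified operator satisfies $\|\widetilde{P}_n^{\beta}(f)\| \le 3\|f\|$.

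Next I would bound $|\widetilde{P}_n^{\beta}(g,x)-g(x)|$ for $g\in C_B^2[0,\infty)$ by Taylor expansion: write $g(t)=g(x)+g'(x)(t-x)+\int_x^t (t-u)g''(u)\,du$, apply $\widetilde{P}_n^{\beta}$, use that the linear term vanishes, and estimate the remainder by $\frac12\|g''\|\,\widetilde{P}_n^{\beta}((t-x)^2,x)$. Expanding $\widetilde{P}_n^{\beta}((t-x)^2,x)$ in terms of $\mu_{n,2}^{\beta}(x)$, $\mu_{n,1}^{\beta}(x)$ and the shift amount $\frac{\beta(2-\beta)}{n(1-\beta)}(1-e^{-nx})$ produces exactly the quantity $\delta_n$ displayed in the statement; this is the calculation identifying the three summands of $\delta_n$ with $\mu_{n,2}^{\beta}(x)$, the correction coming from $-2\,\mu_{n,1}^{\beta}(x)\cdot(\text{shift})$ rewritten via the value of $\mu_{n,1}^{\beta}$, and the squared shift. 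Thus $|\widetilde{P}_n^{\beta}(g,x)-g(x)| \le \frac12\,\delta_n\,\|g''\|$.

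Then the main chain of inequalities: for arbitrary $g\in C_B^2[0,\infty)$,
\[
|P_n^{\beta}(f,x)-f(x)| \le |\widetilde{P}_n^{\beta}(f-g,x)| + |(f-g)(x)| + |\widetilde{P}_n^{\beta}(g,x)-g(x)| + \Big|f\big(x+\tfrac{\beta(2-\beta)}{n(1-\beta)}(1-e^{-nx})\big)-f(x)\Big|.
\]
The first two terms are bounded by $4\|f-g\|$, the third by $\tfrac12\delta_n\|g''\|$, and the last by $\omega\!\big(f,\tfrac{\beta(2-\beta)}{n(1-\beta)}(1-e^{-nx})\big)$ by definition of the modulus of continuity. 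Taking the infimum over $g$ gives $4K_2(f,\delta_n/8)$ plus that modulus term, and the standard equivalence $K_2(f,\delta)\le C\,\omega_2(f,\sqrt{\delta})$ yields the claimed bound with a suitable constant $C$.

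The only genuinely delicate point is the bookkeeping in step two: verifying that the expansion of $\widetilde{P}_n^{\beta}((t-x)^2,x)$, after substituting the explicit forms of $\mu_{n,1}^{\beta}$ and $\mu_{n,2}^{\beta}$ from Remark~\ref{r1}, collapses precisely to the three-term expression defining $\delta_n$ — in particular that the cross term $-2(\text{shift})\,\mu_{n,1}^{\beta}(x)$ combines with $(\text{shift})^2$ and the linear part of $\mu_{n,2}^{\beta}$ to leave the stated coefficients. Everything else is the routine $K$-functional argument; I would keep that part terse and concentrate the written proof on the moment identity.
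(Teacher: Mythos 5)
Your overall architecture coincides with the paper's proof (same auxiliary operator, same Taylor/$K$-functional chain, same final step via $K_2(f,\delta)\le C\omega_2(f,\sqrt{\delta})$), but the step you yourself flag as the delicate one is where the argument breaks. Write $s_n(x)=\frac{\beta(2-\beta)}{n(1-\beta)}(1-e^{-nx})$ for the shift. First, the inequality $|\widetilde{P}_n^{\beta}(g,x)-g(x)|\le \frac12\|g''\|\,\widetilde{P}_n^{\beta}((t-x)^2,x)$ is not justified: $\widetilde{P}_n^{\beta}$ is not a positive operator (it is $P_n^{\beta}$ minus one point evaluation plus another), so the pointwise bound on the Taylor remainder cannot simply be pushed through it. Second, even as a formal identity the bookkeeping comes out wrong: with $h(t)=(t-x)^2$ one has $\widetilde{P}_n^{\beta}(h,x)=\mu_{n,2}^{\beta}(x)-s_n(x)^2$, since $h(x+s_n(x))=s_n(x)^2$ and $h(x)=0$; your cross-term computation gives the same thing, because $\mu_{n,1}^{\beta}(x)=s_n(x)$ and so $-2s_n(x)\,\mu_{n,1}^{\beta}(x)+s_n(x)^2=-s_n(x)^2$. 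But the theorem's $\delta_n$ equals $\mu_{n,2}^{\beta}(x)+s_n(x)^2$; the two differ by $2s_n(x)^2$, so your expansion does not collapse to the stated $\delta_n$.

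The repair is exactly what the paper does: split by the triangle inequality \emph{before} estimating,
\begin{align*}
|\widetilde{P}_n^{\beta}(g,x)-g(x)| \le P_{n}^{\beta}\Big(\Big|\int_{x}^{t}(t-u)\,g''(u)\,du\Big|,x\Big)
+\Big|\int_{x}^{x+s_n(x)}\big(x+s_n(x)-u\big)\,g''(u)\,du\Big|,
\end{align*}
and bound the two terms separately by $P_n^{\beta}((t-x)^2,x)\,\|g''\|$ and $s_n(x)^2\,\|g''\|$. It is the \emph{sum} of these two bounds that produces $\delta_n=\mu_{n,2}^{\beta}(x)+s_n(x)^2$, with the plus sign on the squared shift. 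Once this step is corrected, the rest of your argument (the $3\|f\|$ bound for the auxiliary operator, the chain $4\|f-g\|+\delta_n\|g''\|+\omega(f,s_n(x))$, the infimum over $g$) goes through and matches the paper.
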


\begin{proof}
We introduce the auxiliary operators $\bar{P}_{n}^{\beta}:C_B[0,\infty)\to C_B[0,\infty)$ as follows
\begin{align}\label{e25} 
\bar{P}_{n}^{\beta}(f,x) = P_{n}^{\beta}(f,x)-f\bigg(x + \frac{\beta (2-\beta)}{n (1-\beta)} (1-e^{-nx})\bigg)+f(x),
\end{align}
These operators are linear and preserves the linear functions in view of Lemma \ref{l4}. Let $g\in C_B^2[0,\infty)$
and $x, \, t \in[0,\infty).$ By Taylor's expansion
\begin{align*}
g(t) = g(x) + (t-x) \, g^\prime(x) + \int_{x}^{t}(t-u) \, g^{\prime\prime}(u) \, du,
\end{align*}
we have
\begin{align}
|\bar{P}_{n}^{\beta}(g,x)-g(x)| &\leq \bar{P}_{n}^{\beta}\bigg(\bigg|\int_{x}^{t}(t-u)g^{\prime\prime}(u)du\bigg|,x\bigg)
\nonumber\\
&\leq P_{n}^{\beta}\bigg(\bigg|\int_{x}^{t}(t-u)g^{\prime\prime}(u)du\bigg|,x\bigg) \nonumber\\
& \hspace{20mm} + \bigg|\int_{x}^{x + \frac{\beta (2-\beta)}{n (1-\beta)} (1-e^{-nx})}\bigg(x + \frac{\beta (2-\beta)}
{n (1-\beta)} (1-e^{-nx})-u\bigg)g^{\prime\prime}(u)du\bigg|
\nonumber\\
&\leq P_{n}^{\beta}((t-x)^2,x)\|g^{\prime\prime}\|+\bigg|\int_{x}^{x + \frac{\beta (2-\beta)}{n (1-\beta)} (1-e^{-nx})}
\bigg(\frac{\beta (2-\beta)}{n (1-\beta)} (1-e^{-nx})\bigg)du\bigg|\|g^{\prime\prime}\|\nonumber
\end{align}
Next, using Remark \ref{r1}, we have
\begin{align}
|\bar{P}_{n}^{\beta}(g,x)-g(x)| &\leq \bigg[ P_{n}^{\beta}((t-x)^2,x)+\bigg(\frac{\beta (2-\beta)}{n (1-\beta)}
(1-e^{-nx})\bigg)^2\bigg]\|
g^{\prime\prime}\|\nonumber\\
&=\delta_n\|
g^{\prime\prime}\|,\label{e26} 
\end{align}
where $\delta_n=\mu_{n,2}^{\beta}(x)+\bigg(\frac{\beta (2-\beta)}{n (1-\beta)} (1-e^{-nx})\bigg)^2.$

Since
\begin{align*}
|P_{n}^{\beta}(f,x)|\le \sum_{k=1}^{\infty} \left( \int_{0}^{\infty} L_{n,k-1}^{(\beta)}(t) \, dt \right)^{-1}
L_{n,k}^{(\beta)}(x) \, \int_{0}^{\infty} L_{n,k-1}^{(\beta)}(t) \, |f(t)| \, dt + e^{-nx} \, |f(0)| \le \|f\|,
\end{align*}
then by use of (\ref{e23}) we have
\begin{align}\label{e27} 
||\bar{P}_{n}^{\beta}(f,x)|| \le ||P_{n}^{\beta}(f,x)|| + 2||f|| \le 3||f||, \hspace{5mm} f \in C_B[0,\infty).
\end{align}
Using (\ref{e25}), (\ref{e26}) and (\ref{e27}), we have
\begin{align*}
|P_{n}^{\beta}(f,x)-f(x)| &\leq  |\bar{P}_{n}^{\beta}(f-g,x)-(f-g)(x)|+|\bar{P}_{n}^{\beta}(g,x)-g(x)|\\
& \hspace{25mm} + \bigg|f\bigg(x + \frac{\beta (2-\beta)}{n (1-\beta)} (1-e^{-nx})\bigg)-f(x)\bigg|\\
&\leq 4\|f-g\|+\delta_n\|g^{\prime\prime}\|+\bigg|f\bigg(x + \frac{\beta (2-\beta)}{n (1-\beta)} (1-e^{-nx})\bigg)-f(x)\bigg|\\
&\leq C\left\{\|f-g\|+\delta_n\|g^{\prime\prime}\|\right\}+\omega\left(f,\frac{\beta (2-\beta)}{n (1-\beta)} (1-e^{-nx})\right).
\end{align*}
Taking infimum over all $g\in C_B^2[0,\infty)$, and using the inequality $K_2(f,\delta) \leq
C\omega_2(f,\sqrt{\delta})$, $\delta >0$ due to \cite{dl}, we get the desired assertion.
\end{proof}



\end{document}